\newcommand{\blind}{1}
\newcommand{\X}{\mathrm{X}}
\newcommand{\Y}{\mathrm{Y}}
\newcommand{\U}{\mathrm{U}}
\newcommand{\M}{\mathrm{M}}
\newcommand{\pr}{P}
\newcommand{\KM}{\mathrm{KM}}
\newcommand{\KPOD}{\mathrm{KPOD}}
\newcommand{\what}{\widehat}
\newcommand{\Rb}{\mathbb{R}}
\newtheorem{theorem}{Theorem}[section]
\newtheorem{prop}[theorem]{Proposition}
\newtheorem{lemma}[theorem]{Lemma}
\begin{document}

\def\spacingset#1{\renewcommand{\baselinestretch}%
{#1}\small\normalsize} \spacingset{1}


\if1\blind
{
  \title{\bf Some notes on \\the $k$-means clustering for missing data}
  \author{Yoshikazu Terada\thanks{
    Jointly affiliated at RIKEN Center for Advanced Integrated Intelligence Research}\hspace{.2cm}\\
    Graduate School of Engineering Science, Osaka University\\
    and \\
    Xin Guan \\
    Graduate School of Engineering Science, Osaka University}
  \maketitle
} \fi

\if0\blind
{
  \bigskip
  \bigskip
  \bigskip
  \begin{center}
    {\LARGE\bf Some notes on the $k$-means clustering\\ for missing data}
\end{center}
  \medskip
} \fi

\bigskip
\begin{abstract}
The classical $k$-means clustering requires a complete data matrix without missing entries. 
As a natural extension of the $k$-means clustering for missing data, 
the $k$-POD clustering has been proposed, which ignores the missing entries in the $k$-means clustering.
This paper shows the inconsistency of the $k$-POD clustering even under the missing completely at random mechanism.
More specifically, the expected loss of the $k$-POD clustering can be represented as 
the weighted sum of the expected $k$-means losses with parts of variables.
Thus, the $k$-POD clustering converges to the different clustering from the $k$-means clustering as the sample size goes to infinity.
This result indicates that although the $k$-means clustering works well, the $k$-POD clustering may fail to capture the hidden cluster structure.
On the other hand, for high-dimensional data, 
the $k$-POD clustering could be a suitable choice 
when the missing rate in each variable is low.
\end{abstract}

\noindent%
{\it Keywords:}  Missing data, $k$-means clustering, consistency. 
\vfill

\newpage
\spacingset{1.9} 

\section{Introduction}

The $k$-means clustering is one of the most famous clustering algorithms, which provides a partition minimizing the sum of within-cluster variances. 
When a given data matrix has missing entries, 
most clustering algorithms, including $k$-means clustering, 
cannot be applied, whereas missing data are common in various real data applications, e.g., \citep{wagstaff2005making}. 
Here, we consider the $k$-means clustering for missing data.
There are two common approaches for handling missing entries: \textit{complete-case analysis} and \textit{imputation}, both of which can be used as pre-processing ways before the clustering \citep{himmelspach2010clustering}. 
However, since we delete all data points with missing entries in complete-case analysis, 
the number of complete cases could be too small in multivariate data. 
The imputation approach works well when the assumptions on a hidden probabilistic model are correct, 
while it is often complicated and its computational cost is high \citep{lee2022incomplete}. 
Another approach is to modify the Euclidean distance used in $k$-means clustering. 
For example, the partial distance that involves only the observed dimensions is a popular choice \citep{wagstaff2004clustering,lithio2018efficient,datta2018clustering}, 
the main problem is that the modified measurements for distance may not reflect the true structure based on all dimensions and may not even be a distance measure. 

As a natural extension of the $k$-means clustering for missing data, 
the $k$-POD clustering is proposed by \citet{chi2016k}, 
which can be considered as a special case of the matrix completion issue (e.g., see \cite{jain2013low}).
For a data matrix $\mathrm{{X}}=(X_{ij})_{n\times p}$, the set of indexes $\Omega\subset \{1,\dots,n\}\times \{1,\dots,p\}$ indicates the observed entries. The projection $\mathcal{P}$ onto an index set $\Omega$ is introduced to replace the missing entries with zero. That is, $[\mathcal{P}_{\Omega}(\mathrm{{X}})]_{ij}=X_{ij}$ if $(i,j)\in\Omega$, 0 otherwise. Further write a binary matrix $\mathrm{{U}}=(u_{il})_{n\times k}$ for the cluster membership, where $u_{il}=1$ if $i$th observation belongs to $l$th cluster. The $k$ cluster centres are denoted by a matrix $\mathrm{{M}}=(\mu_{lj})_{k\times p}$, the $l$th row of which represents the $l$th cluster center. 
Then, the loss function of the $k$-POD clustering is defined as
\[
\min_{\mathrm{U},\mathrm{M}} \| \mathcal{P}_{\Omega}(\mathrm{{X}}-\mathrm{{U}}\mathrm{M}) \|_F^2 \;\text{ such that }\; 
\mathrm{{U}}\in \{0,1\}^{n\times k},\;\text{ and }\; 
\sum_{l=1}^ku_{il} = 1 \;\text{ for all }\; i = 1,\dots,n,
\]
where 
$\|\mathrm{{A}}\|_F=\big(\sum_{i=1}^n\sum_{j=1}^pa_{ij}^2\big)^{1/2}$ denotes the Frobenius norm 
of a matrix $\mathrm{A} = (a_{ij})_{n\times p}$.
When $\Omega=\{1,\dots,n\}\times\{1,\dots,p\}$, the above loss is equivalent to that of the $k$-means clustering. 
Therefore, the $k$-POD clustering ignores the missing entries in the $k$-means clustering.
The simple and fast majorization-minimization algorithm can solve the optimization of the above loss.
The $k$-POD clustering stably performs 
even under a large proportion of missingness. 
The numerical experiments in \citet{chi2016k} show that 
the $k$-POD clustering works well under various cases. 
As mentioned in \citet{chi2016k}, 
we should note that the $k$-POD clustering has the common limitations as the $k$-means clustering. 
It still seems to work well in those settings where the $k$-means clustering works. 

Interestingly, \cite{WangEtAl19} independently proposes the following $k$-means clustering for missing data:
\begin{align*}
\min_{\Y,\U,\M}\|\mathrm{Y} - \U\M\|_F^2
\;\text{ such that }\; 
&\Y \in \mathbb{R}^{n\times p}: \mathcal{P}_{\Omega}(\Y) = \mathcal{P}_{\Omega}(\X),\\
&\mathrm{{U}}\in \{0,1\}^{n\times k},\;\text{ and }
\sum_{l=1}^ku_{il} = 1 \;\text{ for all }\; i = 1,\dots,n.
\end{align*}
This method is identical to the $k$-POD clustering since
the optimal solution of this problem should satisfy $\mathcal{P}_{\Omega^c}(\Y) = \mathcal{P}_{\Omega^c}(\U\M)$ where $\Omega^c$ is the complement of $\Omega$.

In this paper, unfortunately, 
we will show that the $k$-POD clustering provides
an essentially different partition of the data space with the $k$-means clustering
even under the simplest missing mechanism called the missing completely at random.
More precisely, 
the estimated partition by the $k$-POD clustering converges to 
a partition different from the limit of the $k$-means clustering (\citealp{pollard1981strong}) in the large sample limit.
Thus, even for the setting where the $k$-means clustering works, 
the $k$-POD clustering may fail to capture the hidden cluster structure.
Here, we note that the $k$-means clustering with complete cases has 
the same limit as the $k$-means clustering with all original data under the missing completely at the random (MCAR) mechanism.
To explain this problem, we demonstrate an illustrative example in Figure~\ref{fig:intro}. 
Grey data points are generated from a two-dimensional Gaussian mixture distribution ($n=10^4$), 
and the $i$th row $(X_{i1},X_{i2})$ is randomly observed with probabilities 
$(q_1,q_2) = (1/3,2/3)$, where $q_j\;(j=1,2)$ is the probability of the $j$th dimension being observed.
Here, the number of complete cases is approximately $2200$.
The green dotted line is the cluster boundary of the $k$-mean clustering with complete cases and is almost the same as the cluster boundary of $k$-means with all grey data points.
However, the cluster boundary of the $k$-POD clustering (blue dashed line) is 
completely different from these boundaries.
The essential reason for the distortion result of the $k$-POD clustering lies in the 
difference between the expected losses of the $k$-means and $k$-POD clusterings.
As shown later, 
the expected loss function of the $k$-POD clustering can be written as
the weighted sum of the expected losses of the $k$-means clustering using parts of variables.
 
\begin{figure}
     \centering
     \includegraphics[width=0.5\textwidth]{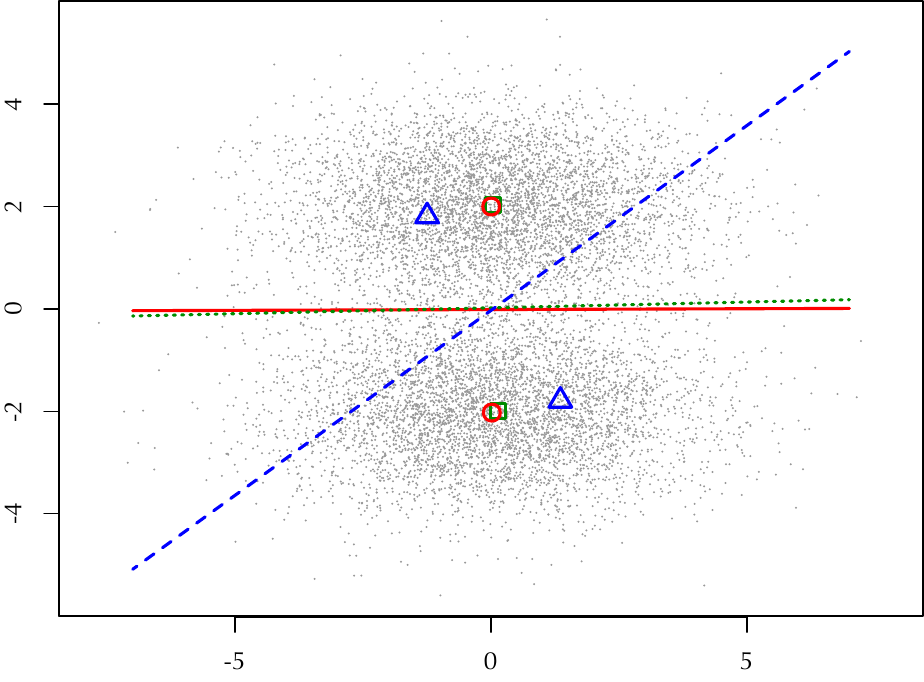}
     \caption{An illustrative example showing that the $k$-POD clustering fails.
     (solid and circle: $k$-means clustering with all data points,
      dotted and square: $k$-means clustering with complete cases,
      triangle and dashed: $k$-POD clustering with missing data).
     }
     \label{fig:intro}
\end{figure}

\section{Inconsistency of $k$-POD clustering}

Let $X_1,\dots,X_n$ be a $p$-dimensional independent sample from a population distribution $P$. 
Write $\X = (X_{ij})_{n\times p}$.
Here, we simply consider the missing completely at random mechanism.
Let $R_{ij}=1$ if $X_{ij}$ is observed and $R_{ij} = 0$ if $X_{ij}$ is missing. 
The response indicator vectors $R_i = (R_{i1},\dots,R_{ip})^{T}\;(i=1,\dots,n)$ are independently distributed from
the multinomial distribution and are completely independent with $\X$.
We assume that $\pr(R_{1}=1_p) >0$ where $1_p = (1,\dots,1)^{T}\in \mathbb{R}^p$.

Let $k$ be the number of clusters.
For given cluster centres $\M= (\mu_1,\dots,\mu_k)^{T}\in \Rb^{k\times p}$, 
the empirical loss of the $k$-means clustering is 
\[
\widehat{L}_n^{(\KM)}(\M) = \frac{1}{n}\sum_{i=1}^n\min_{1 \le l \le k} \|X_i - \mu_l\|^2
=
\frac{1}{n}\|\X - \U\M\|_F^2,
\]
where $\mu_l$ is the $l$th row of $\M$.
Using a similar calculation, the empirical loss of the $k$-POD clustering can be written as
\[
\widehat{L}_n^{(\KPOD)}(\M)
=
\frac{1}{n}\sum_{i=1}^n\min_{1 \le l \le k} \sum_{j=1}^pR_{ij}(X_{ij}-\mu_{lj})^2.
\]

Let $\what{\M}_\KM= \arg\min_\M \widehat{L}_n^{(\KM)}(\M)$ be the estimator of the $k$-means clustering.
\citet{pollard1981strong} shows that, as the sample size $n$ goes to infinity,
the estimator $\what{\M}_\KM$ converges to the minimizer of the expected loss of the $k$-means clustering, that is, 
\[
L^{(\KM)}(\M) = E\left[\min_{1 \le l \le k} \|X_1 - \mu_l\|^2 \right].
\]
Similarly, we can define the expected loss of the $k$-POD clustering as 
\[
L^{(\KPOD)}(\M) = E\left[\min_{1 \le l \le k}\sum_{j=1}^pR_{1j}(X_{1j}-\mu_{lj})^2\right].
\]

As the first result, 
we show that the expected loss of the $k$-POD clustering can be represented as 
the weighted sum of all possible expected $k$-means losses with parts of variables.

\begin{prop}\label{prop}
Let $R_{1}=(R_{11},\dots,R_{1p})^{T}$.
Under the above assumptions, 
\begin{align*}
L^{(\KPOD)}(\M) 
&=
\sum_{r \in \{0,1\}^p}\pr(R_{1} = r) L^{(\KM)}(\M\mid r),
\end{align*}
where $r = (r_1,\dots,r_p)^T\in \{0,1\}^p$, and 
$L^{(\KM)}(\M\mid r)$ is the expected loss of the $k$-means clustering using dimensions with $r_j=1$:
\[
L^{(\KM)}(\M\mid r)=E\left[\min_{1 \le l \le k}\sum_{j=1}^pr_{j}(X_{1j}-\mu_{lj})^2\right].
\]
\end{prop}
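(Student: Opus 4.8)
\emph{Proof strategy.} The plan is to condition on the response vector $R_1$ and then exploit its independence from $X_1$. A useful preliminary observation is that the integrand $\min_{1\le l\le k}\sum_{j=1}^p R_{1j}(X_{1j}-\mu_{lj})^2$, as well as each $\min_{1\le l\le k}\sum_{j=1}^p r_j(X_{1j}-\mu_{lj})^2$, is nonnegative (a sum of squares), so every expectation below is well defined as an element of $[0,\infty]$ and no integrability hypothesis is needed; the asserted identity will hold as an equality in $[0,\infty]$, and in particular as an identity of real numbers once $E\|X_1\|^2<\infty$ (the condition already required for Pollard's consistency result).

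First I would apply the law of total expectation for nonnegative random variables, using that $R_1$ is supported on the finite set $\{0,1\}^p$:
\[
L^{(\KPOD)}(\M)
= E\left[\min_{1\le l\le k}\sum_{j=1}^p R_{1j}(X_{1j}-\mu_{lj})^2\right]
= \sum_{r\in\{0,1\}^p}\pr(R_1 = r)\, E\left[\min_{1\le l\le k}\sum_{j=1}^p R_{1j}(X_{1j}-\mu_{lj})^2 \;\Big|\; R_1 = r\right],
\]
where terms with $\pr(R_1 = r)=0$ are simply dropped (they also contribute $0$ to the right-hand side of the proposition). On the event $\{R_1=r\}$ one has $R_{1j}=r_j$ for every $j$, so inside the $r$-th conditional expectation the random coefficients $R_{1j}$ may be replaced by the constants $r_j$. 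Then, since $R_1$ is assumed completely independent of $\X$, conditioning on $\{R_1=r\}$ leaves the law of $X_1$ unchanged, whence
\[
E\left[\min_{1\le l\le k}\sum_{j=1}^p r_j(X_{1j}-\mu_{lj})^2 \;\Big|\; R_1 = r\right]
= E\left[\min_{1\le l\le k}\sum_{j=1}^p r_j(X_{1j}-\mu_{lj})^2\right]
= L^{(\KM)}(\M\mid r).
\]
Substituting this back into the finite sum yields $L^{(\KPOD)}(\M)=\sum_{r\in\{0,1\}^p}\pr(R_1=r)\,L^{(\KM)}(\M\mid r)$, which is the claim. (Equivalently, one can avoid conditional expectations altogether by writing $E[Z]=\sum_{r}E[Z\,\mathbf{1}_{\{R_1=r\}}]$, replacing $Z$ by its value on $\{R_1=r\}$, and factoring the expectation using independence.)

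I do not anticipate a substantive obstacle here: the argument is essentially just the tower property together with the MCAR independence of $R_1$ and $X_1$. The only steps warranting a line of care are (i) the measure-theoretic justification of the substitution $R_{1j}\mapsto r_j$ under the conditional expectation given $\{R_1=r\}$, which is immediate from the definition of conditioning on a positive-probability event, and (ii) noting that nonnegativity of the integrands is what permits passing the expectation through the finite sum over $r$ with no moment assumption — with the assumption $\pr(R_1=1_p)>0$ guaranteeing the sum is non-degenerate.
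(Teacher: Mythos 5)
Your proposal is correct and follows essentially the same route as the paper: condition on $R_1$, use the tower property to write the expectation as a finite sum over $r\in\{0,1\}^p$, and invoke the independence of $R_1$ and $X_1$ to identify each conditional expectation with $L^{(\KM)}(\M\mid r)$. The extra remarks on nonnegativity and on dropping zero-probability terms are sound but not needed beyond what the paper's one-line argument already uses.
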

\begin{proof}
From the basic property of the conditional expectation, 
we can immediately obtain
\begin{align*}
    L^{(\KPOD)}(\M) 
&=
E\left\{E\left[\min_{1 \le l \le k}\sum_{j=1}^pR_{1j}(X_{1j}-\mu_{lj})^2
\;\bigg|\; R_{1}
\right]\right\} \\
&=
\sum_{r \in \{0,1\}^p}\pr(R_{1} = r) L^{(\KM)}(\M\mid r),
\end{align*}
which completes the proof. 
\end{proof}

Now, we will show the inconsistency of the $k$-POD clustering
from the viewpoint of $k$-means clustering.
Write $\widehat{\mathrm{M}}_{\KPOD}\in \arg\min_{\mathrm{M}} \widehat{L}_n^{(\KPOD)}(\mathrm{M})$ 
for an estimator of a cluster center matrix by the $k$-POD clustering.
Write $\mathcal{M}_{\KPOD}^\ast$ for the set of optimal cluster center matrices of the expected loss $L^{(\KPOD)}$ (i.e., 
$\mathcal{M}_{\KPOD}^\ast =\arg\min_{\mathrm{M}} L^{(\KPOD)}(\mathrm{M})$).
We note that $L^{(\KPOD)}$ might have multiple optimal solutions even when $L^{(\KM)}$ has the unique optimal solution up to relabelling.
For example, according to the following theorem, we can assume that the triangular points are close enough to a pair of optimal centers for $L^{(\KPOD)}$ in Figure~\ref{fig:intro}.
In this case, as this Gaussian mixture is symmetric about the $y$-axis, 
the pair of points symmetric to the triangular points across the $y$-axis is also near optimal.

The following theorem shows the convergence of the $k$-POD clustering in the large sample limit.
\begin{theorem}\label{theorem:main}
    Assume that $\|X_1\|$ is bounded almost surely.
    Then we have, as $n$ goes to infinity,
    \[
    L^{(\KPOD)}\big(\widehat{\mathrm{M}}_{\KPOD}\big) \rightarrow \min_{\mathrm{M}} L^{(\KPOD)}(\mathrm{M}) 
    \;\text{ and }\;
    d\big(\widehat{\mathrm{M}}_{\KPOD},\mathcal{M}_{\KPOD}^\ast\big)\rightarrow 0
    \quad a.s.,
    \]
where $d(\M,\mathcal{M}_{\KPOD}^\ast)=\min_{\M^\ast\in \mathcal{M}_{\KPOD}^\ast}\|\M-\M^\ast\|$.
\end{theorem}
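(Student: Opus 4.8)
The plan is to mimic Pollard's (1981) strong consistency argument for $k$-means, adapted to the $k$-POD loss. The key structural observation is that $\widehat{L}_n^{(\KPOD)}(\M)$ is itself an empirical mean of i.i.d.\ bounded random variables: writing $g_\M(X_1,R_1) = \min_{1\le l\le k}\sum_{j=1}^p R_{1j}(X_{1j}-\mu_{lj})^2$, we have $\widehat{L}_n^{(\KPOD)}(\M) = \frac1n\sum_{i=1}^n g_\M(X_i,R_i)$ and $L^{(\KPOD)}(\M) = E[g_\M(X_1,R_1)]$. Since $\|X_1\|$ is bounded a.s., there is a constant $B$ with $\|X_1\|\le B$ a.s.; this lets us restrict attention to cluster-center matrices whose rows lie in the ball of radius $B$ (any center farther out can be moved toward the data without increasing the loss, by the same pruning argument Pollard uses), so the effective parameter space $\Thetabb = \{\M : \|\mu_l\|\le B \text{ for all } l\}$ is compact.

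First I would establish a uniform strong law of large numbers: $\sup_{\M\in\Thetabb} \big|\widehat{L}_n^{(\KPOD)}(\M) - L^{(\KPOD)}(\M)\big| \to 0$ a.s. This follows from (i) compactness of $\Thetabb$, (ii) continuity of $\M\mapsto g_\M(x,r)$ for each fixed $(x,r)$ — note the pointwise minimum of finitely many continuous functions is continuous — and (iii) the uniform envelope bound $|g_\M(X_1,R_1)| \le p\,(B + B)^2 = 4pB^2$ which is integrable. One can invoke the standard Glivenko–Cantelli-type theorem for classes indexed by a compact metric space with a dominated, equicontinuous family of functions, or equivalently cover $\Thetabb$ by finitely many $\varepsilon$-balls and use continuity plus the ordinary SLLN at the centers; this is essentially verbatim the first half of Pollard's proof with $\|X_i-\mu_l\|^2$ replaced by $g_\M(X_i,R_i)$.

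Given the uniform SLLN, the two conclusions follow by routine arguments. For the loss convergence: $L^{(\KPOD)}(\widehat\M_{\KPOD}) - \min_\M L^{(\KPOD)}(\M) \le L^{(\KPOD)}(\widehat\M_{\KPOD}) - \widehat{L}_n^{(\KPOD)}(\widehat\M_{\KPOD}) + \widehat{L}_n^{(\KPOD)}(\M^\ast) - L^{(\KPOD)}(\M^\ast)$ for any fixed optimal $\M^\ast\in\mathcal{M}_{\KPOD}^\ast$, and both differences are bounded by the supremum, hence vanish a.s. For the set-distance convergence $d(\widehat\M_{\KPOD},\mathcal{M}_{\KPOD}^\ast)\to 0$: continuity of $L^{(\KPOD)}$ (by dominated convergence) on the compact set $\Thetabb$ gives, for every $\delta>0$, that $\inf\{L^{(\KPOD)}(\M) : \M\in\Thetabb,\ d(\M,\mathcal{M}_{\KPOD}^\ast)\ge\delta\} > \min_\M L^{(\KPOD)}(\M)$; combined with $L^{(\KPOD)}(\widehat\M_{\KPOD})\to\min_\M L^{(\KPOD)}$, this forces $d(\widehat\M_{\KPOD},\mathcal{M}_{\KPOD}^\ast) < \delta$ eventually, a.s.

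The main obstacle — and the only place genuine care is needed — is the compactification step: justifying that one may restrict both the empirical and the expected optimization to the compact set $\Thetabb$ without loss. For $k$-means this is Pollard's pruning lemma; for the $k$-POD loss one must check the analogous statement, namely that if some row $\mu_l$ has $\|\mu_l\|>B$ then either that cluster is empty in the optimal assignment (so the row is irrelevant) or replacing $\mu_l$ by its projection onto the radius-$B$ ball strictly decreases every term $R_{ij}(X_{ij}-\mu_{lj})^2$ it governs. The subtlety is that with missing entries a center coordinate $\mu_{lj}$ is only "pulled" by the observed entries; a coordinate never observed in cluster $l$ is unconstrained, but such a coordinate contributes zero to the loss and can be set arbitrarily (e.g.\ to zero), so it does not obstruct compactification of the \emph{relevant} coordinates. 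Handling this cleanly — possibly by arguing on a reduced parametrization, or by noting that $\mathcal{M}_{\KPOD}^\ast$ and the a.s.\ limit points of $\widehat\M_{\KPOD}$ can be taken within $\Thetabb$ after such a coordinate-wise normalization — is the one step I would write out in full detail; everything else transfers from Pollard (1981) essentially unchanged.
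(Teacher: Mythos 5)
Your proposal is correct and follows essentially the same route as the paper: restrict to a compact ball of centers using the a.s.\ bound on $\|X_1\|$, prove a uniform strong law of large numbers over that compact set (the paper does this via bracketing with a finite $\delta$-net, which is the covering argument you describe), establish continuity of $L^{(\KPOD)}$, and then run the standard Pollard-type argument for both conclusions. Your extra care about the compactification step with never-observed coordinates is a point the paper actually glosses over (it simply asserts that the $k$-POD algorithm keeps the centers in the ball), so no gap there.
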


In general, since $L^{(\KPOD)}$ and $L^{(\KM)}$ have different optimal solutions, 
$\widehat{\mathrm{M}}_{\KPOD}$ does not converge to an optimal solution of the expected $k$-means loss $L^{(\KM)}$. 
Therefore, the estimated partition of the data space by the $k$-POD clustering is generally different from that by the $k$-means clustering.
On the other hand, for high-dimensional data with few missing components,
the $k$-POD clustering could be a suitable choice.
For high-dimensional data, even if the missing rate of each variable is low, 
the number of complete cases could be very small.
Thus, in such cases,
the $k$-POD clustering provides much better results than the complete-case analysis.

\section{Simulations}

In this section, we illustrate some numerical simulations to verify the inconsistency of $k$-POD. 
We consider the settings of complete data on which $k$-means itself can perform well. 
The Gaussian mixture model $X\sim \sum_{l=1}^{k}\pi_l N(\mu_l^{\ast},\Sigma_l^{\ast})$ is thus used to generate the original complete data, 
where $N(\mu_l^{\ast},\Sigma_l^{\ast})$ is the $p$-dimensional Gaussian distribution with mean $\mu_l^{\ast}$ and covariance $\Sigma_l^{\ast}$, and $\pi_l$ is the mixture weight of the $l$th component. 
The missing completely at random mechanism is considered in this section. 
We generate the original complete data matrix $\mathrm{X}=(X_{ij})_{n\times p}$ from the above mixture model and the indicator matrix $\mathrm{R}=(R_{ij})_{n\times p}$ from the Bernoulli distribution, that is, $R_{1j},\dots,R_{nj}$ is an independent sample from the Bernoulli distribution with the probability of success $q_j \in (0,1]$. Then the incomplete data matrix is generated by $\mathrm{X}$ and $\mathrm{R}$, that is, $X_{ij}$ is observed if $R_{ij}=1$ and $X_{ij}$ is missing if $R_{ij}=0$.  
To measure the bias of the estimator of a cluster center matrix by the $k$-POD clustering $\widehat{\M}_{\KPOD}$, we take the mean square error between $\widehat{\mathrm{M}}_{\KPOD}$ and $\mathrm{M}_{\KM}^{\ast}$ to be the criterion, which is given by
\[
    \text{MSE}(\widehat{\mathrm{M}}_{\KPOD}, \mathrm{M}_{\KM}^{\ast})=\sum_{l=1}^{k} \min_{l'=1,\dots,k}\| \hat{\mu}_{\KPOD,l} - {\mu}_{\KM ,l'}^{\ast} \|^2,
\]
where the minimization with respect to $l'$ is to eliminate the influence of index permutation. 
Since $\mathrm{M}_{\KM}^{\ast}$ is the minimizer of the expected loss of the $k$-means clustering, it is unknown. We here substitute it by the estimator of the $k$-means clustering with the sample size $n=10^5$. 
Since the loss function of the $k$-POD clustering is highly non-convex as with the original $k$-means clustering, we use multiple random initializations and employ the result with the smallest loss value. 
Here, we should note that  \citet{chi2016k} provides the R package \texttt{kpodclustr} including the implementation of the $k$-POD clustering with a single specific initialization, which often provides poor local minima with higher loss values. 

We first verify the inconsistency of the $k$-POD clustering via the trend of MSE as sample size $n$ goes to infinity in Figure~\ref{fig_mse}. 
We consider two settings, each of which consists of $k=3$ clusters with equal component weights and identical component covariance, i.e., $\pi_l=1/3$ and $\Sigma_l^{\ast}=\mathrm{I}_p$ for $l=1,\dots,k$, 
where $\mathrm{I}_p$ is the identity matrix. 
The components' means and observed probabilities are as follows. 
(a) The components means are $\mu_1^{\ast}=(0,0)^{T}$, $\mu_2^{\ast}=(3,0)^{T}$ and $\mu_3^{\ast}=(1.5,(6.75)^{1/2})^{T}$, respectively. The observed probabilities are $q_j=2/3$ for $j=1,2$. 
(b) The components means are $\mu_1^{\ast}=(0,0,0,0,0)^{T}$, $\mu_2^{\ast}=(3,0,0,0,0)^{T}$ and $\mu_3^{\ast}=(1.5,(6.75)^{1/2},0,0,0)^{T}$, respectively. 
The observed probabilities are $q_j=2/3$ for $j=1,\dots,5$. 
For better comparison, we provide the result of $k$-means clustering with complete cases as well as the result of $k$-means clustering with all original data points. 
Since few complete cases are left in the second setting, 
we ignore the result of $k$-means clustering with complete cases. 
It can be seen that the MSE of $k$-means with complete cases (dotted line) would gradually approach that of $k$-means with all original data (solid line). However, the result of $k$-POD clustering (dashed line) would converge but not to zero as the sample size goes to infinity in both settings. The significant gap between the dashed line and the solid line thus implies the inconsistency of the $k$-POD clustering. 

\begin{figure}
     \centering
          \begin{subfigure}[b]{0.3\textwidth}
         \centering
         \includegraphics[width=\textwidth]{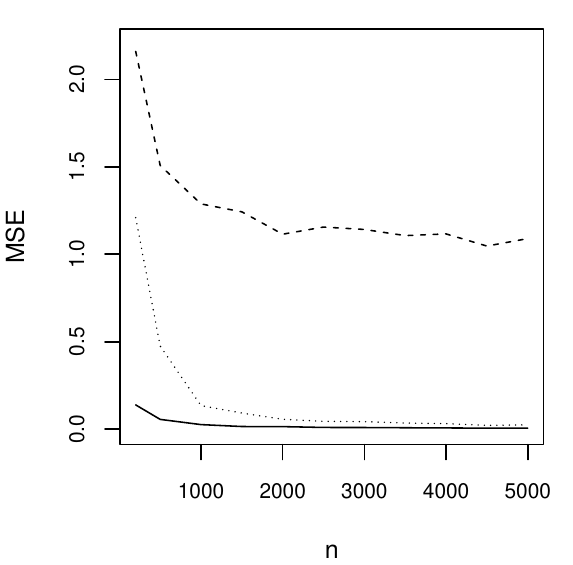}
         \caption{The setting in Section~1}
     \end{subfigure}
     \begin{subfigure}[b]{0.3\textwidth}
         \centering
         \includegraphics[width=\textwidth]{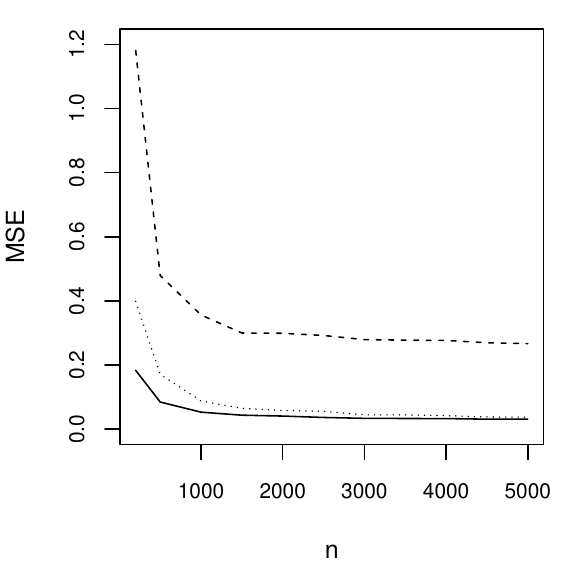}
         \caption{Setting~(a)}
     \end{subfigure}
      \begin{subfigure}[b]{0.3\textwidth}
         \centering
         \includegraphics[width=\textwidth]{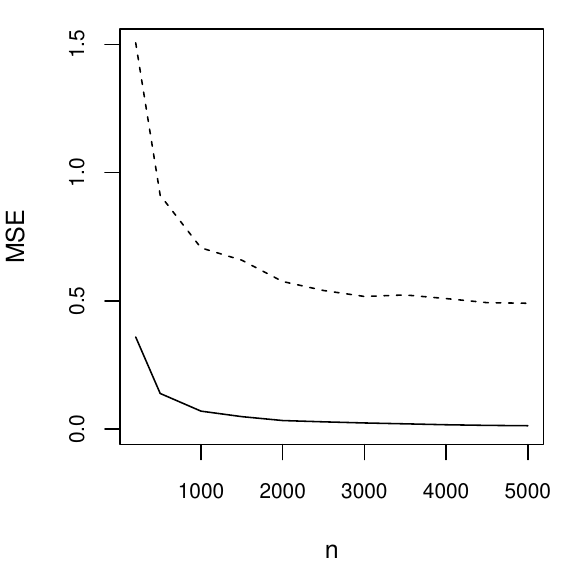}
         \caption{Setting~(b)}
     \end{subfigure}
     \caption{The MSEs of estimated cluster centers 
     (dashed line: the $k$-POD clustering, solid line: the $k$-means clustering, 
     dotted line: the $k$-means clustering with complete cases).}
     \label{fig_mse}
\end{figure}

We further compare the bias of $k$-POD clustering on estimating the cluster centers via several synthetic datasets. 
Table~\ref{table_mse} summarises the results of MSE of different methods on different synthetic datasets, 
that is, \textit{Oracle} ($k$-means clustering with all original data), \textit{Complete-case} ($k$-means clustering with complete cases) and \textit{$k$-POD}. 
The original complete data are also generated from the Gaussian mixture model $\sum_{l=1}^3(1/3)N(\mu^{\ast}_l,\mathrm{I}_p)$.
The synthetic datasets with missing values are all generated by missing completely at random for all dimensions with equal missing probabilities from the generated complete datasets. 
We consider three settings of the Gaussian mixture model for the original complete data. 
\begin{itemize}
\item[(1)] Two-dimensional setting ($p=2$): The three component means are $\mu_1^{\ast}=(0,0)^{T}$, $\mu_2^{\ast}=(3,0)^{T}$, and $\mu_3^{\ast}=(1.5,\sqrt{6.75})^{T}$, respectively. 
\item[(2)] Five-dimensional setting ($p=5$): The first two dimensional elements of the component means are the same as the setting~(1). 
The other components are zero.
\item[(3)] Fifty-dimensional setting ($p=50$): 
The setting is the same as the setting (2) except to the number of dimensions.
\end{itemize}
We set the missing rates of all variables to be the same, 
which varies in $\{10\%, 30\%, 50\%\}$. 
The reported values are the averages and standard deviations of MSEs among 100 repetitions. 
 
\begin{table}
\def~{\hphantom{0}}
\caption{Comparison of MSE of different methods}
\label{table_mse}
\begin{tabular}{crrrcccc}
\hline
Setting & $n$ & $p$ & $k$ &  Missing rate  & Oracle & Complete-case & $k$-POD \\
\hline
(1)		& 3000 & 2 & 3 & 10\% & 0.035 (0.01) & 0.037 (0.01) & 0.070 (0.02) \\
(1)		& 3000 & 2 & 3 & 30\% & 0.035 (0.01) & 0.042 (0.02) & 0.243 (0.05) \\
(1)		& 3000 & 2 & 3 & 50\% & 0.033 (0.01) & 0.067 (0.03) & 0.546 (0.13) \\
(2)		& 5000 & 5 & 3 & 10\% & 0.015 (0.01) & 0.024 (0.01) & 0.043 (0.02) \\
(2)		& 5000 & 5 & 3 & 30\% & 0.014 (0.01) & 0.078 (0.03) & 0.343 (0.08) \\
(2)		& 5000 & 5 & 3 & 50\% & 0.013 (0.01) & 0.421 (0.20) & 1.316 (0.22) \\
(3)		& 10000 & 50 & 3 & 10\% & 0.077 (0.01) & 18.429 (5.49) & 0.117 (0.02)\\
\hline
\end{tabular}
\end{table}

It can be seen that for most datasets, the MSE of the $k$-POD clustering is generally larger than that of others, which indicates the significant bias of estimated cluster centers by the $k$-POD clustering in the application.
On the other hand, for high-dimensional data, even when the missing rate of each variable is low, 
the number of complete cases could be too small, which implies that the complete-case analysis would be less effective. As shown in the last line of Table~\ref{table_mse}, we can see that the $k$-POD clustering performs very well, whereas the complete-case analysis fails due to the small sample size. Therefore, for these cases, the $k$-POD clustering is more stable and could be useful in real applications.

\section{Conclusions}

In this paper, 
we study the theoretical properties of the $k$-means clustering with missing data.
Although the $k$-POD clustering is a natural extension of $k$-means clustering for missing data, 
unfortunately, we show the inconsistency of the $k$-POD clustering 
even under the missing completely at random assumption.
More precisely, as the sample size goes to infinity, the $k$-POD clustering converges to the solution of the weighted sum of the expected losses of the $k$-means clustering with parts of variables.
This result shows that the $k$-POD clustering may fail to capture the hidden cluster structure even when the $k$-means clustering works well on the original complete data.
On the other hand, when the missing rate in each variable is low, 
the $k$-POD clustering is effective for high-dimensional data.



\section*{Acknowledgement}
This research was supported by JSPS, Japan KAKENHI Grant (JP20K19756, JP20H00601, and JP24K14855 to YT), and China Scholarship Council (NO. 202108050077 to XG).
The authors wish to express our thanks to Ms.~Jiayu Li for her helpful discussions.

\appendix 
\section{Proof of Theorem~\ref{theorem:main}}

The proof is similar to the proofs for the consistency of the classical $k$-means.
For simplicity of notation, 
we omit the superscript $(\KPOD)$ from $\widehat{L}_n^{(\KPOD)}(\M)$ and $L^{(\KPOD)}(\M)$ through the appendix. 
Let $C>0$ be the positive constant such that $\|X_1\|<C$ almost surely.
Let $\mathcal{B}(C) := \{x \in \Rb^p\mid \|x\|\le C\}$ be the closed ball. 
By the $k$-POD algorithm, 
we can ensure that the estimated centers of the $k$-POD clustering are in $\mathcal{B}(C)$.
Define $\mathcal{E}_k(C)=\{ \M \in \mathbb{R}^{k\times p} \mid \mu_l\in \mathcal{B}(C),\; l=1\dots,k \}$ for $C>0$.

We write $\widehat{\mathrm{M}}\in \arg\min_{\mathrm{M}\in \mathcal{E}_k(C)} \widehat{L}_n(\mathrm{M})$ 
for an estimator of a cluster center matrix by the $k$-POD clustering, and write $\mathcal{M}^\ast$ for the set of optimal cluster center matrices of the expected loss $L$ (i.e., 
$\mathcal{M}^\ast =\arg\min_{\mathrm{M}\in \mathcal{E}_k(C)} L(\mathrm{M})$). 
Any element of $\mathcal{M}^\ast$ is denoted by $\mathrm{M}^{\ast}$. 
We further write $m_k=\min_{\M\in \mathcal{E}_k(C)} L(\M)$ for the minimal value of the expected loss $L$ 
with $k$ clusters. 
We note that ${\M}^{\ast}$ is not necessarily unique.

Since $\mathcal{B}(C)$ is compact, $\mathcal{E}_k(C)$ is also compact under the topology induced by the Hausdorff metric. 
The following lemma gives the uniform strong law of large numbers and the continuity of the expected loss $L$ on such compact set $\mathcal{E}_k(C)$. 
\begin{lemma}\label{lemma:A1}
Under the assumption in Theorem~\ref{theorem:main}, the followings hold for $C>0$:
\begin{description}
\item{(a)} The uniform law of large numbers holds:
\[
\lim_{n\rightarrow\infty} \sup_{\M\in \mathcal{E}_k(C)} \left| \what{L}_n(\M) - L(\M) \right| =0 \quad a.s.,\;\text{ and}
\]
\item{(b)} The loss function $L(\M)$ is continuous on $\mathcal{E}_k(C)$.
\end{description}
\end{lemma}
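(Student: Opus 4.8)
The plan is to reduce both statements to a single structural fact about the integrand
\[
\phi(x,r,\M) := \min_{1\le l\le k}\sum_{j=1}^p r_j\,(x_j-\mu_{lj})^2 ,
\]
so that $\what{L}_n(\M)=\frac1n\sum_{i=1}^n\phi(X_i,R_i,\M)$ and $L(\M)=E[\phi(X_1,R_1,\M)]$. The fact is: on the region $\|x\|\le C$, $r\in\{0,1\}^p$, $\M\in\mathcal{E}_k(C)$, the map $\M\mapsto\phi(x,r,\M)$ is bounded (by $4C^2$) and Lipschitz with constant $4C$, \emph{uniformly} in $(x,r)$. For the Lipschitz bound, note first that for each fixed $l$ and $\mu_l,\mu_l'\in\mathcal{B}(C)$,
\[
\Big|\sum_{j=1}^p r_j(x_j-\mu_{lj})^2-\sum_{j=1}^p r_j(x_j-\mu'_{lj})^2\Big|
\le \sum_{j=1}^p |\mu_{lj}-\mu'_{lj}|\,|2x_j-\mu_{lj}-\mu'_{lj}|
\le 4C\,\|\mu_l-\mu'_l\|,
\]
using $r_j\le 1$, the Cauchy--Schwarz inequality, and $\|x\|,\|\mu_l\|,\|\mu'_l\|\le C$; then the elementary inequality $|\min_l a_l-\min_l b_l|\le\max_l|a_l-b_l|$ promotes this to $|\phi(x,r,\M)-\phi(x,r,\M')|\le 4C\,\|\M-\M'\|_F$. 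This is exactly where the almost-sure boundedness of $\|X_1\|$ assumed in Theorem~\ref{theorem:main} is used.

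Part (b) is then immediate: $|L(\M)-L(\M')|\le E\,|\phi(X_1,R_1,\M)-\phi(X_1,R_1,\M')|\le 4C\,\|\M-\M'\|_F$, so $L$ is Lipschitz, hence continuous, on $\mathcal{E}_k(C)$.

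For part (a) I would run the standard compactness (finite-net) argument. The pairs $(X_i,R_i)$ are i.i.d.\ — the $X_i$ are i.i.d., the $R_i$ are i.i.d., and the two sequences are independent — so for each fixed $\M$ the summands $\phi(X_i,R_i,\M)$ are i.i.d.\ and bounded, and the ordinary strong law of large numbers gives $\what{L}_n(\M)\to L(\M)$ a.s. Fix $\varepsilon>0$ and, using compactness of $\mathcal{E}_k(C)$, choose a finite $\delta$-net $\M^{(1)},\dots,\M^{(N)}$ with $\delta=\varepsilon/(8C)$. For arbitrary $\M\in\mathcal{E}_k(C)$, picking the nearest net point $\M^{(t)}$ and applying the uniform Lipschitz bound to both $\what{L}_n$ (uniformly in the sample) and $L$ gives
\[
\big|\what{L}_n(\M)-L(\M)\big|\le 8C\delta+\max_{1\le t\le N}\big|\what{L}_n(\M^{(t)})-L(\M^{(t)})\big|,
\]
hence $\sup_{\M\in\mathcal{E}_k(C)}|\what{L}_n(\M)-L(\M)|\le\varepsilon+\max_{t}|\what{L}_n(\M^{(t)})-L(\M^{(t)})|$. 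The finite maximum tends to $0$ a.s., so $\limsup_n\sup_{\M}|\what{L}_n(\M)-L(\M)|\le\varepsilon$ a.s.; intersecting the corresponding a.s.\ events over $\varepsilon=1/m$, $m\in\Nb$, finishes the proof.

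I do not expect a genuine obstacle here; the argument is essentially bookkeeping. The one point needing care is the uniform-in-$(x,r)$ modulus of continuity of $\phi$ in $\M$ — it is precisely what lets a single net $\{\M^{(t)}\}$ work simultaneously for every realization and for the limit $L$ — and this is the step that breaks without almost-sure boundedness of $\|X_1\|$, since otherwise $\phi$ is only locally Lipschitz with an $x$-dependent constant. One could alternatively invoke a generic uniform strong law (a Glivenko--Cantelli/bracketing statement for a uniformly bounded equicontinuous family indexed by a compact set), but the self-contained net argument above is the cleanest route in this context.
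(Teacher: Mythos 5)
Your proof is correct, and it reaches both conclusions by a route that is similar in spirit to the paper's but technically different in execution. The paper proves part (a) by constructing explicit one\nobreakdash-sided brackets $\mathring{g}_{\M}(x,r)=\min_l(\lVert x\circ r-\nu_l\circ r\rVert-\delta)^2$ and $\bar{g}_{\M}(x,r)=\min_l(\lVert x\circ r-\nu_l\circ r\rVert+\delta)^2$ over a finite $\delta$-net of $\mathcal{B}(C)$, bounding $E[\bar g_{\M}-\mathring g_{\M}]\le 4\delta k(\int\lVert x\rVert\,dP+C)$, and then invoking the bracketing Glivenko--Cantelli theorem (Theorem~19.4 of van der Vaart); part (b) is handled by a separate, structurally parallel computation. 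You instead isolate a single uniform Lipschitz estimate $|\phi(x,r,\M)-\phi(x,r,\M')|\le 4C\lVert\M-\M'\rVert_F$ on $\{\lVert x\rVert\le C\}$ and run a self-contained finite-net chaining argument, with the pointwise strong law supplying convergence at the net points; part (b) then falls out of the same Lipschitz bound for free. Your version is more elementary (no external empirical-process theorem) and more economical, since one modulus-of-continuity computation serves both claims; the paper's bracketing route has the minor advantage that the entropy bound only uses $E\lVert X_1\rVert<\infty$ rather than almost-sure boundedness (though boundedness is assumed in Theorem~\ref{theorem:main} regardless, so nothing is lost either way). All the steps you flag as needing care --- the uniformity of the Lipschitz constant in $(x,r)$, the boundedness of the summands for the pointwise SLLN, and the countable intersection over $\varepsilon=1/m$ --- are handled correctly.
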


Now, we are ready for proving Theorem~\ref{theorem:main}. 

\begin{proof}[Proof of Theorem~\ref{theorem:main}]
The first result is the immediate consequence from (a) of Lemma~\ref{lemma:A1}.
Thus, we will show the second result.
By the optimality of $\what{\M}_n$, we have 
\[
\lim\sup_n \left[ \what{L}_n(\what{\M}_n) - \inf_{\M^\ast \in \mathcal{M}^\ast}\what{L}_n(\M^\ast) \right]
\le 0 \quad \text{a.s.}
\]
By the strong law of large numbers, 
\[
\forall \M^\ast \in \mathcal{M}^\ast;\;\lim\sup_n \inf_{\M^\ast \in \mathcal{M}^\ast}\what{L}_n(\M^\ast)
\le 
\lim\sup_n \what{L}_n(\M^\ast) = L(\M^\ast)\quad\text{a.s.}
\]
Thus, we obtain 
\begin{align*}
0 
&\ge \lim\sup_n  \what{L}_n(\what{\M}_n)  - \lim\sup_n \inf_{\M^\ast \in \mathcal{M}^\ast}\what{L}_n(\M^\ast)
\ge \lim\sup_n  \what{L}_n(\what{\M}_n) - L(\M^\ast)\quad\text{a.s.}
\end{align*}

By the continuity of the expected loss $L$, we have 
\begin{align*}
    \forall \delta >0;\;\min_{\M\in \mathcal{E}_{k,\delta}(C)} L(\M) > \min_{\M \in \mathcal{E}_k(C) } L(\M),
\end{align*}
where $\mathcal{E}_{k,\delta}(C)= \{ \M\in \mathcal{E}_k(C) \mid d(\M, \mathcal{M}^{\ast})\geq \delta \}$. 
Thus, from (a) of Lemma~\ref{lemma:A1}, 
the above inequality leads that for any $\delta>0$ 
\[
\lim\inf_n \inf_{\M \in \mathcal{E}_{k,\delta}(C)}\what{L}_n(\M)
\ge 
\inf_{\M \in \mathcal{E}_{k,\delta}(C)}L(\M)
>
L(\M^\ast)
\ge \lim\sup_n  \what{L}_n(\what{\M}_n)\quad\text{a.s.}
\]
This gives that there exists $n_0 \in \mathbb{N}$ almost surely such that
\[
\forall n \ge n_0;\;\inf_{\M \in \mathcal{E}_{k,\delta}(C)}\what{L}_n(\M) > \what{L}_n(\what{\M}_n).
\]
If $d(\what{\M}_n,\mathcal{M}^\ast)\ge \delta$ for some $n \ge n_0$,
we have 
\[
\inf_{\M \in \mathcal{E}_{k,\delta}(C)}\what{L}_n(\M) > \what{L}_n(\what{\M}_n),
\]
which is impossible. 
Therefore, we conclude that
$\lim_{n\rightarrow \infty} d(\what{\M}_n,\mathcal{M}^\ast) = 0$ \text{a.s.}


\end{proof}

\section{Proofs of Lemma~\ref{lemma:A1}}

Here, we provide the proof of Lemma~\ref{lemma:A1}.  
\begin{proof}[Proof of Lemma~\ref{lemma:A1}]
    For any $\M=(\mu_1,\dots,\mu_k)^{T}\in \mathbb{R}^{k\times p}$, define the function $g_{\M}(\cdot,\cdot):\mathbb{R}^p \times \{0,1\}^p\rightarrow \mathbb{R}$ to be $g_{\M}(x,r)=\min_{1\leq l \leq k} \|x\circ r - \mu_l\circ r\|^2$. 
    Let $\mathcal{G}=\{ g_{\M}(\cdot,\cdot) \mid \M\in \mathcal{E}_k(C) \}$.
    \begin{align*}
        \sup_{g_{\M}\in \mathcal{G}} \left| \frac{1}{n}\sum_{i=1}^{n} g_{\M}(X_i,R_i) - E\left[g_{\M}(X_1,R_1)\right] \right| \rightarrow 0\quad \text{a.s.}
    \end{align*}
    From Theorem~19.4 in \cite{van2000asymptotic}, 
    it suffices to show that for each $\epsilon>0$ 
    there exists a finite class $\mathcal{G}_{\epsilon}$ such that for each $g_{\M}\in \mathcal{G}$, 
    there are functions $\mathring{g}_{\M},\Bar{g}_{\M}\in \mathcal{G}_{\epsilon}$ with $\mathring{g}_{\M}\leq g_{\M}\leq \Bar{g}_{\M}$ and $E\left[ \Bar{g}_{\M}(X_1,R_1)- \mathring{g}_{\M}(X_1,R_1) \right]<\epsilon$. 

    For $\delta>0$, let $D_{\delta}$ be a finite subset of $\mathcal{B}(C)$ such that
    \[
        \forall \mu \in \mathcal{B}(C);\;\exists \nu \in D_{\delta};\; \|\mu-\nu\|<\delta.
    \] 
    Define $\mathcal{D}_{k,\delta}=\{ \M\in \mathcal{E}_k(C) \mid  \mu_l\in D_{\delta},\ l=1,\dots,k\}$. 
    For each $\delta>0$, we give the finite class $\mathcal{G}_{\epsilon}$ of the form:
    \begin{align*}
        \mathcal{G}_{\delta}
        =
        \bigg\{ 
        \min_{1\leq l\leq k} \big(\lVert x\circ r - \nu_l \circ r \rVert \pm \delta \big)^2\;\Bigm|\;  
        \mathrm{V}\in \mathcal{D}_{k,\delta}  
        \bigg\}. 
    \end{align*}
    For a fixed $\M\in \mathcal{E}_k(C)$, 
    take $\mathrm{V}=(\nu_1,\dots,\nu_k)^{T}\in \mathbb{R}^{k\times p}$ 
    such that $\nu_l\in D_{\delta}$ and $\lVert \mu_l - \nu_l \rVert<\delta $ for any $l=1,\dots,k$. 
    Then for $g_{\M}\in \mathcal{G}$, we give the corresponding upper and lower bounds in $\mathcal{G}_{\delta}$ to be 
    \begin{align*}
        \mathring{g}_{\M}(x,r)=\min_{1\leq l\leq k} \big(\lVert x\circ r - \nu_l \circ r \rVert - \delta \big)^2
        \;\text{ and }\;
        \Bar{g}_{\M}(x,r)=\min_{1\leq l\leq k} \big(\lVert x\circ r - \nu_l \circ r \rVert + \delta \big)^2 .
    \end{align*}
    
    We first show that $ \mathring{g}_{\M} \leq g_{\M} \leq \Bar{g}_{\M}$. 
    Since $\mathring{g}_{\M}$ and $\Bar{g}_{\M}$ are determined by $\mathrm{V}$ that satisfies $\lVert \mu_l - \nu_l \rVert<\delta$, we have for any $l=1,\dots,k$ and $(x,r)\in \mathbb{R}^{p}\times \{0,1\}^p$, 
    \begin{align*}
        \lVert x\circ r - \nu_l \circ r \rVert -\delta \leq \lVert x\circ r - \mu_l \circ r \rVert \leq \lVert x\circ r - \nu_l \circ r \rVert +\delta.
    \end{align*}
    It follows that $ \mathring{g}_{\M} \leq g_{\M} \leq \Bar{g}_{\M}$. 
    A simple computation gives
    \begin{align*}
        &E\left[ \Bar{g}_{\M}(X_1,R_1)- \mathring{g}_{\M}(X_1,R_1) \right]\\
        &=\sum_{r\in \{0,1\}^p} \pr(R_1=r)\cdot \int \left\{\Bar{g}_{\M}(x,r)- \mathring{g}_{\M}(x,r)\right\} \,dP(x) \\
        &\leq  \sum_{r\in \{0,1\}^p} \pr(R_1=r)\cdot \int \sum_{l=1}^{k} \left\{ \big(\lVert x\circ r - \nu_l \circ r \rVert + \delta \big)^2 - \big(\lVert x\circ r - \nu_l \circ r \rVert - \delta \big)^2 \right\} dP(x)\\
        &= 4\delta\sum_{r\in \{0,1\}^p} \pr(R_1=r)\cdot \sum_{l=1}^{k}\int  \lVert x\circ r - \nu_l \circ r \rVert  dP(x)
        \le
        4\delta k \left( \int \|x\|  dP(x) + C \right).
    \end{align*}
    This yields $E\big[ \Bar{g}_{\M}(X_1,R_1)- \mathring{g}_{\M}(X_1,R_1) \big]<\epsilon$.

    Next, we prove the continuity of $L(\M)$ on $\mathcal{E}_k(C)$. 
    If $\M, \mathrm{V}\in \mathcal{E}_k(C)$ are chosen to safisfy $\max_{l'} \min_{l} \| \mu_{l'} -\nu_{l} \|<\delta$, 
   \[
    \forall l \in \{1,\dots,k\};\;\exists l'(l)\in \{1,\dots,k\};\;\|\mu_{l'(l)}-\nu_l\|<\delta.
   \] 
    Moreover, we have
    \begin{align*}
        & L(\M)- L(\mathrm{V}) \\
        & = \sum_{r\in \{0,1\}^p} \pr(R_1=r)\cdot 
        \int \left(
        \min_{1\leq l' \leq k} \| x\circ r - \mu_{l'} \circ r \|^2  - \min_{1\leq l \leq k} \| x\circ r - \nu_l \circ r \|^2 
        \right)\,dP(x) 
        \\
        &\leq \sum_{r\in \{0,1\}^p} \pr(R_1=r)\cdot  \int \max_{1\leq l \leq k} \left( \| x\circ r - \mu_{l'(l)} \circ r \|^2 - \| x\circ r - \nu_{l} \circ r \|^2 \right) \ dP(x) \\
        &\leq \sum_{r\in \{0,1\}^p} \pr(R_1=r)\cdot 
        \int \max_{1\leq l \leq k} \left\{ (\| x\circ r - \nu_{l} \circ r \| + \delta)^2 - \| x\circ r - \nu_{l} \circ r \|^2 \right\} \ dP(x) \\
        &\leq 2\delta\sum_{r\in \{0,1\}^p} \pr(R_1=r)\cdot 
        \int \max_{1\leq l \leq k} \| x\circ r - \nu_{l} \circ r \| \ dP(x) + \delta^2
        \leq 4C\delta + \delta^2
    \end{align*}
    Similarly, we obtain $L(\mathrm{V})- L(\M)< 4C\delta + \delta^2$, which completes the proof.  
\end{proof}
\vspace{10pt}

\bibliographystyle{agsm}
\bibliography{main_biblio}
\end{document}